\theoremstyle{plain}  
\newtheorem{thm}{Theorem}[section]
\newtheorem{pro}[thm]{Proposition}
\newtheorem{lem}[thm]{Lemma}   
\newtheorem{cor}[thm]{Corollary}
\newtheorem{convention}[thm]{Convention}
\newtheorem{construction}[thm]{Construction}
\newtheorem{remark}[thm]{Remark}
\theoremstyle{definition}
\newtheorem{definition}[thm]{Definition}
\theoremstyle{remark}
\DeclareMathOperator{\Spec}{Spec}
\DeclareMathOperator{\mult}{mult}
\DeclareMathOperator{\codim}{codim}
\DeclareMathOperator{\Ratn}{Rat^n}
\newcommand{\dJen}{de Jonqui\'eres\ }
\newcommand{\QED}{\ifhmode\unskip\nobreak\fi\quad {\rm Q.E.D.}} 
\newcommand\iso{\cong}
\newcommand{\f}{\varphi}
\newcommand{\F}{\mathbb{F}}
\newcommand{\I}{\mathcal{I}}
\renewcommand{\L}{\mathcal{L}}
\newcommand{\M}{\mathcal{M}}
\renewcommand{\O}{\mathcal{O}} 
\renewcommand{\P}{\mathbb{P}}
\newcommand{\R}{\mathcal{R}}
\newcommand{\rat}{\dasharrow}
\newcommand\Biro{{\rm Bir}^{\circ}}
\title{On the unirationality of 3-fold conic bundles}
\author{Massimiliano Mella}
\address{M. Mella\\
Dipartimento di Matematica e Informatica\\ 
Universit\`a di Ferrara\\
Via Machiavelli 35\\
44100 Ferrara Italia}
\email{mll@unife.it}
\date{March 2014}
\thanks{Partially supported by ``Geometria sulle Variet\`a Algebriche''(MIUR)}
\subjclass{Primary 14E07; Secondary 14J30, 14E30}
\keywords{unirational; rational connection; \dJen}
\begin{document}
\maketitle
\section{Introduction}
A variety $X$ is unirational if it is dominated by a rational
variety. This was classically considered close to be rational and
 a long outstanding problem, after L\"uroth and Castelnuovo,
was to give examples of unirational non rational varieties. After
decades of struggle three different approaches gave almost
simultaneously the required examples, \cite{CG} \cite{IM}
\cite{AM}. Since then unirationality was considered old fashioned
 and it was gradually substituted by the more charming and powerful notion of
 rational connection. A variety is rationally connected if two
 general points can be joined by a rational curve. Rational connection
 is more suited
 for modern algebraic geometry and a great amount of results about
 rationally connected varieties come out of the theory of deformation of
 rational curves, see \cite{Ko}. Clearly this posed a new quest, still
 open, for
 rationally connected non unirational varieties. This paper is not
 going in this direction, but, following the stream opened in \cite{MM}, aims to show that the two notions can cooperate. The main result  is the following
 unirationality statement for 3-fold conic bundles, see definition \ref{def:cb}.
 
\begin{thm}\label{thm:conicbundle} Let $T$ be a 3-fold and $\pi:T\to W$ a standard
  conic bundle outside a codimension 2. Let $\Delta\subset W$ be the
  discriminant curve of $\pi$. Assume that there is a base
  point free pencil of rational curves $\Lambda$ in $W$, with $\Lambda\cdot \Delta\leq 7$. Assume that there is a very ample linear system $\M$ with $\M\cdot \Lambda=1$. Then $T$ is unirational.
 \end{thm}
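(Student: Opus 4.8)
The plan is to use the pencil $\Lambda$ to fibre $T$ over $\P^1$ and to deduce the unirationality of $T$ from the unirationality, over the function field $K:=\C(\P^1)$ of the base of $\Lambda$, of the generic fibre of this fibration --- a conic bundle surface over $\P^1_K$ of anticanonical degree at least $1$. Concretely: since $\Lambda$ is base point free it is a morphism $f\colon W\to\P^1$ whose fibres are the members of $\Lambda$, and I set $g:=f\circ\pi\colon T\to\P^1$. By Bertini the generic fibre $C$ of $f$ is a smooth genus-zero curve over $K$, and the hypothesis $\M\cdot\Lambda=1$ says a general member of $\M$ is a degree-one multisection of $f$, so $C(K)\ne\emptyset$ and $C\cong\P^1_K$. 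Hence the generic fibre of $g$ is $S:=\pi^{-1}(C)$, a smooth projective conic bundle over $C\cong\P^1_K$ whose degenerate fibres lie over $C\cap\Delta$; their number equals $C\cdot\Delta=\Lambda\cdot\Delta\le7$, so $K_S^2=8-\Lambda\cdot\Delta\ge1$. If $S$ is $K$-unirational then a $K$-rational variety dominating $S$ spreads out, over a non-empty open $U\subseteq\P^1$, to a rational $\C$-variety dominating $g^{-1}(U)$ and hence dominating $T$; so it suffices to show that $S$ is $K$-unirational.

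Next I would produce rational points and simplify. By Tsen's theorem $K=\C(\P^1)$ is a $C_1$-field, so every conic over $K$ has a $K$-point; applied to the fibres of $S\to\P^1_K$ over the $K$-points of the base this shows $S(K)\ne\emptyset$, and since $K$ is infinite we may in fact choose $K$-points of $S$ outside any prescribed proper closed subset. Contracting Galois-stable families of disjoint $(-1)$-curves contained in fibres, I pass to a relatively minimal conic bundle $S'\to\P^1_K$, $K$-birational to $S$, still carrying $K$-points in general position and with at most $\Lambda\cdot\Delta\le7$ degenerate fibres, so that $K_{S'}^2\ge8-\Lambda\cdot\Delta\ge1$; if $S'\to\P^1_K$ admits a $K$-rational section then its generic fibre is $\P^1$ and $S'$ is $K$-rational, so I may assume there is none.

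Now I would run a case analysis on $K_{S'}^2$. If $K_{S'}^2\ge5$, then $S'$ --- which has a $K$-point --- is $K$-rational by Manin's theory of rational surfaces over a field. If $2\le K_{S'}^2\le4$, then $S'$ is a del Pezzo surface or a minimal conic bundle of that degree, and its $K$-unirationality follows from the classical results on del Pezzo surfaces and conic bundles of degree $\ge2$ with a rational point in general position (Segre, Manin, Iskovskikh; the degree-two case being completed by Manin and by Koll\'ar--Mella), the genericity required --- avoiding the exceptional curves and, in degree two, the ramification of the anticanonical double cover --- being supplied by the previous step. Finally, if $K_{S'}^2=1$, that is $\Lambda\cdot\Delta=7$ and $S'$ is a degree-one del Pezzo surface or a degree-one conic bundle over $\P^1_K$, then $S'$ is $K$-unirational by the theorems of Koll\'ar--Mella on the unirationality of degree-one del Pezzo surfaces and degree-one conic bundles over an infinite field, the rational points provided by the $C_1$ property furnishing the multisections that their de Jonqui\`eres-type parametrizations require. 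In every case $S'$, hence $S$, is $K$-unirational, and therefore so is $T$; the parametrization of $T$ so obtained is of de Jonqui\`eres type relative to the pencil $g$.

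The hard part is the extremal case $\Lambda\cdot\Delta=7$, in which the generic-fibre surface has anticanonical degree $1$: there no rationality is available and one must invoke the delicate unirationality theorems for degree-one del Pezzo surfaces and degree-one conic bundles, and one must check both that the $C_1$ property of $\C(\P^1)$ really does yield rational points (or multisections) in sufficiently general position on that surface, and that the reduction to a relatively minimal model leaves the bound $\le7$ intact. The number $7$ is chosen exactly so that the generic fibre lands in anticanonical degree $\ge1$, which is the present limit of the surface technology; improving it would require new unirationality results for rational surfaces.
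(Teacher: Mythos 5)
Your reduction to the generic fibre is sound: fibering $T$ over $\P^1$ by $\Lambda$, using $\M\cdot\Lambda=1$ (or Tsen) to identify the base curve with $\P^1_K$, and observing that $K$-unirationality of the generic fibre spreads out to unirationality of $T$ is exactly the Enriques criterion, and the count $K_{S'}^2=8-\Lambda\cdot\Delta\geq 1$ on a relatively minimal model is correct. The gap is in the final step, and you have located it yourself: for the extremal case $K_{S'}^2=1$ you invoke ``the theorems of Koll\'ar--Mella on the unirationality of degree-one del Pezzo surfaces and degree-one conic bundles.'' No such theorem exists for degree-one del Pezzo surfaces --- their unirationality over a non-closed field, even with the canonical rational point, is a well-known open problem --- and the unirationality of degree-one conic bundles over a non-closed field was likewise open when this paper was written; it was established only later (Koll\'ar--Mella, \emph{Amer.\ J.\ Math.}\ 2017), by an argument at least as involved as the present paper, and that later work reproves the present theorem as a corollary. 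So in the crucial case $\Lambda\cdot\Delta=7$ you have reduced the statement to an unproved assertion of comparable difficulty rather than proved it; even in the case $K_{S'}^2=2$ one must quote results for \emph{minimal conic bundles} rather than del Pezzo surfaces, with the rational point in the required general position.

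The paper deliberately avoids this route: its stated point is that one need never prove anything over the non-closed field $K$. It combines Enriques' criterion with Graber--Harris--Starr: it suffices to exhibit inside $\Ratn(T/\P^1)$ a family whose fibre over a general $x\in\P^1$ is a rationally connected variety parametrizing irreducible rational curves on $S_x$ (Proposition~\ref{pro:reduction}); GHS then yields a section, i.e.\ the rational multisection surface required by Enriques. All the work then takes place over the algebraically closed field: the hypothesis $\M\cdot\Lambda=1$ is used to cut out general point sets on the fibres $S_x$, elementary transformations and de Jonqui\`eres maps identify the relevant family of curves on $S_x$ with $\R(Y;4,11)_{\P^2}$ for seven general points $Y$ (Propositions~\ref{pro:conicbundle} and~\ref{pro:cbtop2}), and an explicit Cremona computation (Proposition~\ref{pro:singular}) shows this to be a pencil of irreducible degree-$11$ curves with seven $4$-fold points and a triple point at $p_0$, hence rationally connected. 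That computation is the actual content of the theorem and has no counterpart in your proposal.
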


To put Theorem \ref{thm:conicbundle} in the right perspective recall that 
Iskovskikh conjectured, \cite{Is}, that, except for a special well known case, a 3-fold standard conic bundle $\pi:T\to W$ with connected discriminant curve $\Delta$ is rational if and only if, up to birational isomorphism, there is a base point free pencil of rational curves $\Lambda$ with $\Lambda\cdot\Delta\leq 3$. It is quite easy to prove the sufficiency of this conjecture, \cite{Is}, while the necessity is very hard and Shokurov was able to prove it under the additional hypothesis that $W$ is either the plane or a minimal ruled surface $\F_e$, \cite{Sh}. 
The following is probably the most natural application of the previous theorem
 \begin{cor}\label{cor:main}
 Let $\pi:T\to W$ be a standard 3-fold conic bundle. Let $\Delta\subset W$ be the discriminant curve. Assume that one of the following is satisfied:
 \begin{itemize}
 \item[-] $W\iso\F_e$ and $\Delta\sim a C_0+ b F$, with $a\leq 7$,
\item[-] $W\iso\P^2$ and $\deg\Delta\leq 8$,
\item[-] $W\iso\P^2$,  $\deg\Delta= 9$, and $\Delta$ is singular.
 \end{itemize}
Then $T$ is unirational.
 \end{cor}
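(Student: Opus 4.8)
The plan is to check, in each of the three cases and --- when $W\iso\P^2$ --- after replacing $T$ by a suitable birational model, the remaining hypotheses of Theorem \ref{thm:conicbundle}: a base point free pencil $\Lambda$ of rational curves on the base, together with a very ample linear system $\M$ satisfying $\M\cdot\Lambda=1$. As unirationality is a birational invariant, it suffices to produce such data for a conic bundle birational to $T$ that is standard outside a codimension $2$.

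\emph{The case $W\iso\F_e$.} Take $\Lambda=|F|$, the ruling: it is a base point free pencil of rational curves, and writing $\Delta\sim aC_0+bF$ one gets $\Lambda\cdot\Delta=F\cdot(aC_0+bF)=a\le 7$ by hypothesis. Take $\M=|C_0+(e+1)F|$, which is very ample on $\F_e$ and satisfies $\M\cdot\Lambda=(C_0+(e+1)F)\cdot F=1$. Theorem \ref{thm:conicbundle} then applies verbatim and yields that $T$ is unirational.

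\emph{The cases $W\iso\P^2$.} Since $\P^2$ carries no base point free pencil, I first blow it up. Put $d=\deg\Delta$ and choose a point $p\in\P^2$: a general point when $d\le 7$, a general point of $\Delta$ when $d=8$, and a point with $m:=\mult_p\Delta\ge 2$ when $d=9$ and $\Delta$ is singular. In every case $d-m\le 7$. Let $\sigma:\F_1=\Bl_p\P^2\to\P^2$ be the blow up, $E$ its exceptional curve, $C_0=E$ the $(-1)$-curve and $F$ the ruling of $\F_1$. The key point is the existence of a conic bundle $\pi':T'\to\F_1$, standard outside a codimension $2$ and birational to $T$ over $\sigma$, whose discriminant is the strict transform $\widetilde\Delta$ of $\Delta$; then $\widetilde\Delta\sim(d-m)C_0+dF$ in $\Pic\F_1$. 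Granting this, $\Lambda=|F|$ and $\M=|C_0+2F|$ satisfy $\Lambda\cdot\widetilde\Delta=d-m\le 7$ and $\M\cdot\Lambda=1$ with $\M$ very ample, and Theorem \ref{thm:conicbundle} gives that $T'$, hence $T$, is unirational.

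\emph{The crux.} It remains to produce $T'$, the only step requiring genuine input from the birational geometry of conic bundles. If $p\notin\Delta$ one takes $T'=T\times_{\P^2}\F_1$: the base change of a standard conic bundle along the blow up of a point off the discriminant is again standard, with discriminant $\sigma^*\Delta=\widetilde\Delta\sim dC_0+dF$. If $p\in\Delta$ the fibre product has reducible --- and, over a singular point of $\Delta$, non-reduced --- fibres all along $E$, so $E$ enters its discriminant divisor with positive multiplicity; one removes it by an elementary transformation of conic bundles along $E$ (concretely, by twisting by an appropriate multiple of $E$ the $\P^{2}$-bundle in which the conic bundle sits), whose effect is to delete $E$ from the discriminant while turning the fibres over $E$ into smooth conics. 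I expect the main technical obstacle to lie in keeping the total space smooth after this operation: it is smooth away from the finite set $E\cap\widetilde\Delta$, over which the fibre product is already singular, and near those finitely many points one should only expect a standard conic bundle \emph{outside a codimension $2$} --- precisely the generality in which Theorem \ref{thm:conicbundle} has been established. The remaining ingredients --- birational invariance of unirationality, and the very ampleness of $C_0+(e+1)F$ on $\F_e$ --- are routine.
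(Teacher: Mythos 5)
Your proposal follows essentially the same route as the paper. The $\F_e$ case is handled identically, with $\Lambda=|F|$ and $\M=|C_0+(e+1)F|$, and the $\P^2$ cases are reduced to $\F_1$ by blowing up a suitably chosen point so that the new discriminant meets the ruling in at most $7$ points; your bookkeeping $d-m\le 7$ matches the paper's $a=\deg\Delta-\mult_z\Delta$ (the paper always takes $z\in\Delta$, you take a general point off $\Delta$ when $d\le 7$, which also works). The one step you rightly single out as the crux --- the existence of a conic bundle $\pi':T'\to\F_1$ birational to $T$, standard (at least outside codimension $2$), whose discriminant is $\mu^{-1}(\Delta)\setminus C_0$ --- is precisely where the paper does not argue by hand: it invokes Sarkisov's results on standard models of conic bundles under blow-ups of the base, \cite[Proposition~2.4 and Corollary~2.5]{Sa}, which produce a genuinely standard conic bundle over $\F_1$ with exactly that discriminant. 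Your sketch via the fibre product followed by an elementary transformation along $E$ is the right picture, but, as you yourself acknowledge, it leaves the smoothness of the total space near $E\cap\widetilde\Delta$ unverified; this is a real (if well-known) technical point, and citing Sarkisov is what closes it. Note also that the paper records that standardness of $\pi$ forces $\Delta$ to have at most ordinary double points, which is the input Sarkisov's construction uses and which pins down $\mult_z\Delta=2$ in the $\deg\Delta=9$ case. With that reference supplied, your argument is complete and coincides with the paper's.
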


In particular Corollary \ref{cor:main}, thanks to Shokurov criteria, \cite{Sh}, produces infinitely many families of unirational 3-folds that are not rational.

 \begin{cor}\label{cor:main2}
 Let $\pi:T\to \F_e$ be a standard 3-fold conic bundle. Let $\Delta\subset W$ be the discriminant curve. Assume that $\Delta\sim a C_0+ b F$, with $3< a\leq 7$, then $T$ is unirational and not rational.
 \end{cor}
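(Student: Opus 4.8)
The plan is to deduce the unirationality from Corollary~\ref{cor:main} and the non-rationality from Shokurov's results \cite{Sh}; the only thing that really has to be checked is that the two numerical ranges are compatible, namely $a\le 7$ on the unirationality side and $a\ge 4$ on the non-rationality side.

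For the unirationality I would invoke the first case of Corollary~\ref{cor:main}, and to see that this case follows from Theorem~\ref{thm:conicbundle} I would argue as follows. Take $W=\F_e$, let $\Lambda=|F|$ be the ruling: this is a base point free pencil of rational curves, and $\Lambda\cdot\Delta=F\cdot(aC_0+bF)=a\le 7$. Let $\M=|C_0+(e+1)F|$; this linear system is very ample on $\F_e$ and satisfies $\M\cdot\Lambda=(C_0+(e+1)F)\cdot F=1$. Since a standard conic bundle is in particular a standard conic bundle outside a codimension $2$ locus, Theorem~\ref{thm:conicbundle} applies and $T$ is unirational. Letting $e$ and $b$ range and $a\in\{4,5,6,7\}$ then produces infinitely many such $T$.

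For the non-rationality I would appeal to Shokurov's proof of the necessity part of Iskovskikh's conjecture over $\F_e$: were $T$ rational, then, up to birational equivalence of conic bundles, some birational model $(S',\Delta')$ of $(\F_e,\Delta)$ would carry a base point free pencil $\Lambda'$ of rational curves with $\Lambda'\cdot\Delta'\le 3$. The remaining point is that this is impossible when $a\ge 4$: when the discriminant is that large the only birational maps to other conic bundles over rational surfaces are generated by elementary transformations, which preserve the ruling and hence the integer $\Delta\cdot F=a$, together with the interchange of the two rulings when $e=0$; after normalizing the model so that $[\Delta]$ is in standard form, the minimal possible value of $\Lambda'\cdot\Delta'$ is therefore $a$ (respectively $\min\{a,b\}$ when $e=0$), which is at least $4$. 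Hence no such pencil exists and $T$ is not rational; the exceptional family in Iskovskikh's conjecture lives over $\P^2$ and so causes no trouble. In this argument the two intersection-number computations and the very-ampleness of $C_0+(e+1)F$ are routine; the one step that needs genuine care — the main obstacle — is the last one, namely reading off from \cite{Sh} that the discriminant class $aC_0+bF$ with $a>3$ really does forbid a good pencil on every admissible birational model.
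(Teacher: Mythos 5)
Your overall architecture coincides with the paper's: Corollary~\ref{cor:main2} is stated there without a separate proof, as an immediate combination of the first case of Corollary~\ref{cor:main} (unirationality) with Shokurov's non-rationality criterion. Your unirationality half is exactly the paper's argument for that first case --- $\Lambda=|F|$, $\M=|C_0+(e+1)F|$, $\Lambda\cdot\Delta=a\le 7$, $\M\cdot\Lambda=1$ --- and is correct.

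The non-rationality half has a genuine gap, and it sits precisely at the step you yourself single out as the main obstacle. Shokurov's result is not obtained by classifying birational maps between conic bundles; the usable form of it is that $T$ is not rational whenever $|2K_W+\Delta|\neq\emptyset$ (this is also what makes the pencil formulation birationally robust: any base point free pencil of rational curves has $K_W\cdot\Lambda=-2$, hence $\Lambda\cdot\Delta=(2K_W+\Delta)\cdot\Lambda+4\ge 4$ as soon as $2K_W+\Delta$ is effective). On $\F_e$ one has $2K_W+\Delta\sim(a-4)C_0+(b-2e-4)F$, so the criterion requires $a\ge 4$ \emph{and} $b\ge 2e+4$, and the second inequality neither follows from the hypotheses nor is produced by your argument. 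Your claim that all admissible models are reached by elementary transformations, so that the minimal value of $\Lambda'\cdot\Delta'$ is $a$ (or $\min\{a,b\}$ for $e=0$) ``which is at least $4$,'' is unjustified and in fact fails: for $e=0$ and $b\le 3$ the second ruling already gives $\Lambda\cdot\Delta=b\le 3$ and $T$ is rational by the easy (sufficiency) direction of Iskovskikh's conjecture; for $e=1$ and $a=b=4$ the discriminant is disjoint from $C_0$, and contracting $C_0$ exhibits $T$ as a conic bundle over $\P^2$ with quartic discriminant, which is rational. To close the argument you must either impose $b\ge 2e+4$ (automatic for $e\ge 2$ when $C_0\not\subset\Delta$, since then $b\ge ae$) or check it in the families you construct, and then quote Shokurov in the form $|2K_W+\Delta|\neq\emptyset$. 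The corollary as printed is itself silent on $b$, so the caveat touches the paper's formulation too, but your write-up asserts the false intermediate claim explicitly.
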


Let me briefly explain the main points of the proof.
The first step is to use  Enriques criterion and \cite{GHS} to 
reduce the unirationality of $T$ to the rational connection of some
subvariety in $\Ratn(S)$, where $S$ is a standard conic bundle
surface. Then this subvariety is proved to be birational to a
subvariety of $\Ratn(\P^2)$ that is seen to be a linear space.

The paper is organized as follows. The first section describes the reduction of
the unirationality statement to a statement about rational
connection of subvarieties of $\Ratn(S)$. The second section allows to substitute subvarieties in $\Ratn(S)$ with subvarieties in $\Ratn(\P^2)$. The third section proves the theorem.

{\sc acknowledgments}
   I would like to thank S\'andor Kov\'acs, this project started in
   Seattle and he prevented me many times from going in wrong
   directions.

\section{Notation and conventions}
I work over an algebraically closed field $k$ of characteristic zero.
Let me start with the following definitions.

\begin{definition}\label{def:cb}
  A standard conic bundle of dimension $n$ is a flat morphism $\pi:Z\to W$ with 
  \begin{itemize}
  \item[-] $n=\dim Z=\dim W+1$
\item[-] $Z$ and $W$ smooth
\item[-] $-K_Z$ $\pi$-ample
  \end{itemize}
For a standard conic bundle let $F\iso\P^1$ be the general fiber. I
say that $\pi:Z\to W$ is a standard conic bundle outside a codimension $a$ if there is
a dense open subset $U\subseteq W$, with $\codim U^c\geq a$,
such that $\pi_{|\pi^{-1}(U)}$ is a standard conic bundle.
\end{definition}

\begin{definition} Let $\F_e=\P_{\P^1}(\O\oplus\O(-e))$ be the Segre--Hirzebruch surface.
Fix a conic bundle structure  $\pi:S\to\P^1$, with general fiber $F$:
\begin{itemize}
\item[-] $C_0\subset S$ is the only section with negative self intersection if $e>0$ or a fixed section if $e=0$,
\item[-] $F_p:=\pi^{-1}(\pi(p))$  is the fiber through the point $p\in S$. 
\end{itemize}
\end{definition}

I am interested in studying rational curves on standard conic
bundles of dimension 2. I refer to \cite{Ko} for all the necessary definitions. To do this I adopt the following convention and definitions.
\begin{convention}Let $\pi:S\to\P^1$ be a standard conic bundle of dimension 2.
In the following $X, Y\subset S$ are such that:
\begin{itemize}
\item[-] $X, Y$ are disjoint zero dimensional reduced
  subschemes,
\item[-]$\sharp(\pi(X\cup Y))=\sharp(X\cup Y)$,
\item[-]for some $e\geq 0$, there is a birational map $\mu:S/\P^1\to\F_e/\P^1$ such that
  $\mu(X\cup Y)\subset\F_e\setminus C_0$, and $\mu^{-1}$ is an isomorphism in a neighborhood of $X$.
\end{itemize}\label{ref:xy}
\end{convention}


\begin{definition}\label{def:conicbundle}Let $S$ be a standard conic
  bundle of dimension 2. I define the algebraic sets
\begin{eqnarray*}
\R^{X}(Y;a,d)_S:=\overline{\{\mbox{irreducible rational curves in $|\I_{X^{2a}\cup
    Y^a}(-aK_{S}+d F)|$}\}}\subset
\\ \Ratn(S).
\end{eqnarray*}
\end{definition}
\begin{remark}\label{rem:CH} Let me stress that by construction the general element of any irreducible component of $\R^{X}(Y;a,d)_S$ is an irreducible rational curve.
 The varieties $\R^{X}(Y;a,d)_S$ can also be interpreted as a locally closed subset in $|-aK_{S}+d F|$. In this way it is a subvariety of the Severi variety $V(-aK_{S}+d F)$, see \cite{CH} for a modern account. I will switch between the two descriptions according to the convenience of the moment.
\end{remark}

The final ingredient are \dJen transformations. To introduce them let me recall the quasi projective variety $\Biro_d$.

Whenever $f_1,f_2,f_3\in k[x,y,z]_d$ are not all zero, let us denote by $[f_1 , f_2 , f_3]$ the equivalence class of the triplets $(f_1,f_2,f_3)$ with respect to the relation $(f_1,f_2,f_3)\sim(\lambda f_1,\lambda f_2, \lambda f_3)$, for $\lambda\in k^*$.
Consider $[f_1,f_2,f_3]$ as an element of $\P^{3N-1=3d(d+3)/2+2}$, where the homogeneous coordinates are all coefficients of the three polynomials $f_1,f_2,f_3$, up to multiplication by the same nonzero scalar for all of them.
Setting
\[
\omega\colon\P^2\rat\P^2, \qquad \omega([x,y,z]) = [f_1(x,y,z) , f_2(x,y,z) , f_3(x,y,z)],
\]
let us define, according to \cite{BCM},
\[
\Biro_d=\{  [ f_1 , f_2 , f_3 ] \mid \omega \text{ is birational and } \gcd(f_1,f_2,f_3)=1 \} \subset \P^{3N-1}.
\]

If $[f_1,f_2,f_3]\in\Biro_d$, let us identify it with the birational map $\omega$.

\begin{definition}\label{defgamma}
Let $\omega=[f_1,f_2,f_3]\in\Biro_d$, $d\geqslant2$.
Setting $B$ the linear span $\langle f_1,f_2,f_3 \rangle$ of the polynomials $f_1,f_2,f_3$ in $k^N$, the plane $\P(B)\subset\P^{N-1}$ is called the \emph{homaloidal net} associated to $\omega$ and I denote it by $\L_\omega$.

The general element of $\L_\omega$ defines an irreducible rational
plane curve of degree $d$ passing through some fixed points $p_1,\ldots,p_r$ in
$\P^2$, called \emph{set-theoretic base points} of $\L_\omega$, with certain multiplicities.
\end{definition}

\begin{definition}\label{DeJon}
The map $\omega\in\Biro_d$ is called a \emph{de Jonqui\`eres transformation} if there exists a base point of $\L_\omega$ with multiplicity $d-1$.
\end{definition}

\begin{remark}
The inverse of a \dJen transformation of degree $d$ is again a \dJen transformation of degree
$d$. Let $\omega:\P^2\rat \P^2$ be a \dJen map. Then $\L_\omega$ has $2d-2$ base points $\{q_1,\ldots,q_{2d-2}\}$,
eventually infinitely near, of multiplicity 1. 
 The simple base points are the residual intersection of a general element in $\L_\omega$ with a fixed curve of
degree $d-1$ having $p_0$ of multiplicity $d-2$. The latter curve is
contracted by $\omega$ to a smooth point, the base point of
multiplicity $(d-1)$ of the inverse.
\end{remark}

\begin{convention}\label{con:star}

Let $p_0:=[1,0,0]\subset\P^2$,
I will say that a zero dimensional reduced subset $Z\subset\P^2\setminus\{p_0\}$ satisfies condition $(\dag)$ if 
$$ \begin{array}{ll}
\hspace{-6,5cm} (\dag) & \langle q_i,p_0\rangle\not\ni q_j \mbox{\rm \ for any
             $i\neq j$}\\
 \end{array}
.$$
\end{convention}
\begin{definition}\label{def:centered} I say that a \dJen transformation $\omega:\P^2\rat\P^2$, of degree $d$,  is centered at $A$ if
  \begin{itemize}
\item[-] $A$ satisfies condition $(\dag)$ 
\item[-] $\L_\omega$ has multiplicity $d-1$ in $p_0$
\item[-] $|A|=2d-2$
\item[-]  $A$ is the set of simple base points of $\omega$.
  \end{itemize}
\end{definition}

 \section{From unirationality to rational connection}
\label{sec:uniratcon}
In this section the unirationality problem of
Theorem~\ref{thm:conicbundle} is translated into a statement on
rational connection of subvarieties in $\Ratn(S)$, where $S$ is a standard conic bundle of dimension 2.

Let $T$ be a 3-fold and $\pi:T\to W$ a conic bundle outside a codimension 2.
 Let
$\Delta\subset W$ be the discriminant curve, that is the curve that
describes the singular fibers.
Let $\Lambda$ be a base point free pencil of rational curves on $W$
and $f:W\to\P^1$ the morphism associated to $\Lambda$.
Consider the composition $\psi:=f\circ\pi$
$$\xymatrix{
T\ar[r]^\pi\ar@/_1pc/[rr]_{\psi}&
W\ar[r]^{f}&\P^1.&\\}
$$
Let $x\in\P^1$ be a general point and $S_x$ 
the fiber of $\psi$ over
$x$. Then $S_x$ is a standard conic bundle with
 $\delta$ reducible fibers.
Let $S_\eta$ be the generic fiber and
$$\overline{S}:=S_\eta\otimes_{k(x)}\Spec\overline{k(x)}$$
 the algebraic
closure. Then there is an $e\geq 0$ such that $\overline{S}$ is the
blow up in $\delta$ distinct points
$\{p_1,\ldots,p_\delta\}\subset\F_e\setminus C_0$ of the surface
$\F_e$. The curve $C_0\subset\overline{S}$ has self intersection
$C_0^2=-e$.
Let $\tilde{C}_0$ be a curve conjugate with $C_0$ over $k$, then $\tilde{C}_0^2=-e$. In particular the surface $S_x$ can be seen as the blow
up of $\F_e$ in $\delta$ points in $\F_e\setminus C_0$. 

I want to define subsets $X$ and $Y$
on the family of standard conic bundles $S_x$, keep in mind
Convention~\ref{ref:xy}. Assume that there is a very ample linear system $\M$ such that $\M\cdot \Lambda=1$.

 Let $m_i\in \M$ be a general element and 
 $D_{m_i}=\pi^{-1}(m_i)$ the corresponding surface in $T$. Then any section
 $\Sigma_{m_i}\subset D_{m_i}$ defines a point in $S_x$.
Fix $\{\Sigma_{m_1},\ldots,\Sigma_{m_s}\}$ general sections, and non negative integers $a$ and $d$.
Let 
$$Z_x=\cup_1^m\Sigma_{i}\cap S_x$$ 
and consider the subvarieties
\begin{eqnarray*}
 \R(\Sigma_{1},\ldots,\Sigma_{m};a,d):=\{([C],x)|
C\in\R^{Z_x}(\emptyset;a,d)_{S_x})\}\subset\\\Ratn(T/\P^1).
\end{eqnarray*}

Let 
\begin{equation}
  \label{eq:nu}
\nu:\R(\Sigma_{1},\ldots,\Sigma_{m};a,d)\to\P^1  
\end{equation}
 be the structure map.
 \begin{remark}
 We may interpret the generic fiber of $\nu$ as 
$\R^{Z_\eta}(\emptyset;a,d)_{S_\eta}$, where $Z_\eta$ is the set of generic points in $\cup_1^m\Sigma_{l_i}$.
 \end{remark}
The following proposition is the bridge I am looking for.
\begin{pro}\label{pro:reduction}
Let $\pi:T\to W$ be a 3-fold conic bundle outside a
codimension 2.  Assume that there is a base
  point free pencil of rational curves $\Lambda$ in $W$, and a very ample linear system $\M$ with $\M\cdot \Lambda=1$. Let
$$\R(\Sigma_{m_1},\ldots,\Sigma_{m_s};a,d)\subset \Ratn(T/\P^1)$$
 be as
above and assume that  there is a variety $R\subseteq \R(\Sigma_{m_1},\ldots,\Sigma_{m_s};a,d)$ such that for a general $x$: $R_x:=R\cap \nu^{-1}(x)$ is rationally connected and the general element in $R_x$ is irreducible. Then $T$ is unirational.
\end{pro}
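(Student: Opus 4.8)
\emph{Proof idea.} The plan is to pass to the generic fibre $S_\eta$ of $\psi\colon T\to\P^1$, which is a surface over the field $K:=k(x)$, produce on it a rational curve defined over $K$ that is a multisection of $\pi_\eta\colon S_\eta\to W_\eta$, trivialize the conic bundle by base change along that curve (an Enriques‑type step), deduce that $S_\eta$ is unirational over $K$, and finally spread out over $\P^1$ to conclude that $T$ is unirational over $k$. First one unwinds the hypothesis $\M\cdot\Lambda=1$, which yields two things. Since $\M$ cuts a degree‑one system on the general member of $\Lambda$, the curve $W_\eta$ is a smooth conic over $K$ carrying a $K$‑point, hence $W_\eta\iso\P^1_K$ and $\pi_\eta\colon S_\eta\to\P^1_K$ is a genuine conic bundle over $K$ whose general conic is smooth. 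And since, for a general $m_i\in\M$, the map $f|_{m_i}$ has degree one, the conic bundle surface $D_{m_i}\to m_i\iso\P^1$ has a section $\Sigma_{m_i}$ by Tsen's theorem, so that $p_i:=\Sigma_{m_i}\cap S_\eta$ is a $K$‑rational point of $S_\eta$; thus $Z_\eta=\{p_1,\dots,p_s\}\subseteq S_\eta(K)$, and (assuming, as we clearly may, $a\ge 1$) every member of $R_\eta\subseteq\R^{Z_\eta}(\emptyset;a,d)_{S_\eta}$ contains $p_1$.

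Next I would feed rational connectedness of $R_x$ into \cite{GHS}. As $R_x$ is rationally connected for general $x$, the geometric generic fibre of $\nu\colon R\to\P^1$ is rationally connected, i.e.\ $R_\eta$ is geometrically rationally connected over $K$; hence by \cite{GHS} the morphism $\nu$ has a rational section, and by the usual refinement one through a general point of a general fibre. Since the general element of $R_x$ is irreducible, the locus in $R$ parametrizing irreducible curves is dense (see also Remark~\ref{rem:CH}), so I may choose the section so that its generic point parametrizes an irreducible curve $C_\eta\subset S_\eta$, defined over $K$, of class $-aK_{S_\eta}+dF$. From $K_{S_\eta}\cdot F=-2$ one gets $C_\eta\cdot F=2a>0$, so $C_\eta$ is a multisection of $\pi_\eta$ of degree $2a$; and since $C_\eta$ passes through the $K$‑point $p_1$ — a smooth point of $C_\eta$ for general $Z_\eta$ — its normalisation $\bar C_\eta$ is $\iso\P^1_K$.

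The endgame is then formal. Base changing $\pi_\eta$ along the degree‑$2a$ map $\bar C_\eta\to W_\eta=\P^1_K$ induced by $C_\eta\hookrightarrow S_\eta$ produces $\widetilde S:=S_\eta\times_{\P^1_K}\bar C_\eta$, a conic bundle over $\bar C_\eta\iso\P^1_K$ which carries the diagonal section; its general conic therefore has a rational point, so $k(\widetilde S)$ is purely transcendental over $K$ and $\widetilde S$ is $K$‑rational. Since $\widetilde S\to S_\eta$ is dominant and generically finite of degree $2a$, the surface $S_\eta$ is unirational over $K$. Finally a dominant rational $K$‑map $\P^N_K\map S_\eta$ spreads out, over a dense open $U\subseteq\P^1$, to a dominant rational $k$‑map $\P^N\times U\map\psi^{-1}(U)$; as $\P^N\times U$ is $k$‑rational and $\psi^{-1}(U)$ is dense in $T$, the $3$‑fold $T$ is unirational.

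I expect the delicate point to be the middle step: extracting from \cite{GHS} a section of $\nu$ whose generic point lands in the irreducible locus, and, above all, checking that the resulting curve $C_\eta$ carries a $K$‑rational point. The latter is precisely what the hypothesis $\M\cdot\Lambda=1$ buys — through the section points $p_i$ — and it is indispensable: a conic over $K$ without a $K$‑point is not even $K$‑unirational, so without $p_1\in C_\eta$ the base‑change trick would at best yield the $K$‑unirationality of a quadratic twist of $S_\eta$ rather than of $S_\eta$ itself, and one could not descend to $T$.
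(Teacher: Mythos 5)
Your overall strategy is the paper's: apply \cite{GHS} to the rationally connected fibration $\nu\colon R\to\P^1$ to obtain a section through a general point of $R$, and then run the Enriques base-change trick along the resulting family of $2a$-multisections of $\pi$. The paper phrases the endgame globally --- the section of $\nu$ sweeps out a rational surface $D\subset T$ meeting the general fibre of $\pi$, and one quotes Enriques' criterion --- whereas you unpack the same argument over the generic point of $\P^1$; the two are interchangeable.

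There is, however, one incorrect step, and it is exactly the one you single out as the crux. You assert that $p_1$ is a smooth point of $C_\eta$ and deduce $\overline{C}_\eta\cong\P^1_K$ from the $K$-point of the normalisation lying over it. By Definition~\ref{def:conicbundle}, every member of $\R^{Z_\eta}(\emptyset;a,d)_{S_\eta}$ has multiplicity $2a$ at each point of $Z_\eta$; since $a\geq 1$, the point $p_1$ is a singular point of $C_\eta$, and its preimage in $\overline{C}_\eta$ is a priori a $K$-divisor of degree up to $2a$ whose points need not be individually $K$-rational. The conclusion $\overline{C}_\eta\cong\P^1_K$ is nevertheless true, for a reason that makes the whole worry evaporate: $K=k(x)$ is the function field of a curve over an algebraically closed field, hence $C_1$ by Tsen's theorem, so every smooth genus-zero curve over $K$ has a $K$-point. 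Consequently your closing paragraph misidentifies the role of $\M\cdot\Lambda=1$: it is not needed to place a $K$-point on $C_\eta$ (Tsen provides one for free over $k(\P^1)$); within this proposition the hypothesis only enters through the construction of the points $Z_x$, i.e.\ through the very definition of $\R(\Sigma_{m_1},\ldots,\Sigma_{m_s};a,d)$. With the smoothness claim replaced by the appeal to Tsen, your proof is correct and matches the paper's.
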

\begin{proof} The proof is an easy consequence of Enriques criterion
  and the main result in \cite{GHS}. By Enriques criterion $T$ is
  unirational if and only if there is a rational surface $D\subset T$
  intersecting the general fiber of $\pi$. I am assuming that $R_x$ is rationally connected. Then by \cite{GHS} there is a section, say $\Gamma$, of
  the morphism $\nu$  in equation~(\ref{eq:nu}), through the general point of $R$. Then $\Gamma$ gives the desired rational surface.
\end{proof}
\begin{remark} I want to stress a further consequence of
  Proposition~\ref{pro:reduction}. Usually to prove (uni)rationality
  one has to work on non algebraically closed field to prove
  (uni)rationality results on the generic fiber of a morphism. Thanks to \cite{GHS} the unirationality problem I am interested in is
  reduced to study the rational connection of subvarieties in $\R^{Z}(\emptyset;a,d)_{S}$, for a standard conic
  bundle $S$ of dimension 2 defined over the algebraically closed field $k$.
\end{remark}
The next step is to substitute the standard conic bundle $S$ with $\P^2$.

\section{From conic bundles to $\P^2$}

Let $\pi:S\to\P^1$ be a standard conic bundle of dimension two. In
this section it is described  a variety in $\Ratn(\P^2)$ that is birational to the variety $\R^{Z}(\emptyset;a,d)_{S}$.

Let $\mu:S\to \F_e$ be the blow down to some $\F_e$ such that the
indeterminacy point of $\mu^{-1}$, say
$Y^\prime$, satisfies the convention~\ref{ref:xy}
$$\xymatrix{
S\ar[r]^{\tilde{\pi}}\ar[d]_{\mu}&\P^1\\
 \F_e\ar[ur]_{\pi}&}
$$
With this notations I have.
\begin{pro}
  \label{pro:conicbundle}
Fix $X, Y\subset\F_e$ with $Y^\prime\subseteq Y$ . Let $X_S=\mu^{-1}(X)$ and
 $Y_S=\mu^{-1}(Y\setminus Y^\prime)$ assume that
$\R^{X_S}(Y_S;a,d)_S$ is not empty and $\R^X(Y;a,d)_{\F_e}$ is
irreducible, then
$\R^X(Y;a,d)_{\F_e}$ is birational to $\R^{X_S}(Y_S;a,d)_S$.
\end{pro}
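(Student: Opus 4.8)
The plan is to transport rational curves through the birational map $\mu:S\to\F_e$ and control the base-point conditions on both sides. Since $\mu$ is a blow-up (a composition of blow-ups of points in $\F_e\setminus C_0$, by Convention~\ref{ref:xy}), write $E=\sum_j E_j$ for the total exceptional divisor and recall the standard formulas $K_S=\mu^*K_{\F_e}+E$ and, for the proper transform $\tilde{\Gamma}$ of a curve $\Gamma$, $\tilde{\Gamma}=\mu^*\Gamma-\sum_j(\mathrm{mult}_{p_j}\Gamma)E_j$. First I would fix a general element $\Gamma\in\R^X(Y;a,d)_{\F_e}$, i.e.\ an irreducible rational curve in $|\I_{X^{2a}\cup Y^a}(-aK_{\F_e}+dF)|$, and compute that its proper transform $\tilde{\Gamma}=\mu^*(-aK_{\F_e}+dF)-aE=-aK_S+dF$ (using $\mu^*K_{\F_e}=K_S-E$ and the fact that the indeterminacy locus $Y'$ of $\mu^{-1}$ sits inside $Y$ with the same multiplicity bookkeeping, so the $E_j$ over $Y'$ are absorbed into the $-aK_S$ term). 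The curve $\tilde\Gamma$ is still irreducible and rational, and it passes through $X_S=\mu^{-1}(X)$ with multiplicity $2a$ and through $Y_S=\mu^{-1}(Y\setminus Y')$ with multiplicity $a$, since $\mu$ is an isomorphism near $X$ and near the points of $Y\setminus Y'$. Hence $\Gamma\mapsto\tilde\Gamma$ defines a morphism $\R^X(Y;a,d)_{\F_e}\to\R^{X_S}(Y_S;a,d)_S$ at the level of the open locus of irreducible curves, functorially, hence a rational map of the corresponding subvarieties of $\Ratn(\F_e)$ and $\Ratn(S)$.

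Next I would construct the inverse. Given a general $C\in\R^{X_S}(Y_S;a,d)_S$, push it forward: $\mu_*C$ is an irreducible rational curve on $\F_e$, and from $\mu^*\mu_*C=C+\sum_j(\mathrm{mult}_{p_j}C)E_j$ together with $C\equiv -aK_S+dF$ one reads off $\mu_*C\in|-aK_{\F_e}+dF'|$ for the appropriate twist, and the multiplicities of $\mu_*C$ at the blown-up points are exactly $a$ (coming from the $-aK_S$ part) — this is where one uses that $C$ does not acquire extra tangency with the exceptional curves, which holds for the general member by the non-emptiness hypothesis and a dimension count, and that $Y'\subseteq Y$ ensures the points over $Y'$ carry the expected multiplicity $a$, matching the definition of $\R^X(Y;a,d)_{\F_e}$ with $Y'\subseteq Y$. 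Thus $\mu_*$ sends the general element of $\R^{X_S}(Y_S;a,d)_S$ into $\R^X(Y;a,d)_{\F_e}$, and $\mu_*,\mu^{-1}_*$ are mutually inverse on general members since $\mu$ is birational. Using the irreducibility of $\R^X(Y;a,d)_{\F_e}$, a dominant rational map between varieties of the same dimension from an irreducible source is birational onto its image, and the constructed inverse shows the image is dense in $\R^{X_S}(Y_S;a,d)_S$ — so the two are birational.

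The main obstacle is the bookkeeping of multiplicities at infinitely near and exceptional points: one must verify that for the \emph{general} member the proper transform / push-forward has exactly the prescribed multiplicities at $X_S\cup Y_S$ (resp.\ $X\cup Y$) and no unexpected extra intersection with the exceptional divisor $E$ — equivalently, that imposing the scheme $X_S^{2a}\cup Y_S^a$ on $|-aK_S+dF|$ pulls back to imposing $X^{2a}\cup Y^a$ on $|-aK_{\F_e}+dF|$ without loss. The hypothesis $Y'\subseteq Y$ is precisely what makes the exceptional curves over the indeterminacy locus of $\mu^{-1}$ into ``free'' directions rather than imposed conditions, and $\R^{X_S}(Y_S;a,d)_S\neq\emptyset$ plus irreducibility of the target let one argue on general members where the naive expected behaviour is the actual behaviour. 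I expect this step to be routine but notationally heavy; once it is in place, the birationality is formal.
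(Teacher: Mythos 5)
Your proposal follows the same strategy as the paper: two mutually inverse maps, push-forward $\mu_*$ and strict transform $\mu^{-1}_*$, with non-emptiness and irreducibility used to control the general member. However, your write-up has the logic in the wrong order, and the one step you defer as ``routine but notationally heavy'' is in fact the entire content of the proof. In your first paragraph you assert that the proper transform of a \emph{general} $\Gamma\in\R^X(Y;a,d)_{\F_e}$ lies in $|-aK_S+dF|$, which requires $\mult_{y_i}\Gamma$ to equal $a$ \emph{exactly} (not merely $\geq a$, which is all the definition gives, since $Y'\subseteq Y$) at each $y_i\in Y'$; if the general member had higher multiplicity there, the strict transform would land in a different class and the map would not go where you claim. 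Nothing in the linear-system data rules this out a priori, so paragraph~1 as written begs the question.

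The clean way to close this --- and it is what the paper does --- is to run the argument in the opposite order. For \emph{any} irreducible $C$ with $[C]=-aK_S+dF$ not contained in the exceptional locus, one has $\mult_{y_i}\mu(C)=C\cdot E_{y_i}=(-aK_S+dF)\cdot E_{y_i}=a$ automatically (no genericity or dimension count needed, contrary to your ``extra tangency'' remark). Hence the image of $\Theta:[C]\mapsto[\mu(C)]$ is a nonempty subset of $\R^X(Y;a,d)_{\F_e}$ consisting of curves with multiplicity \emph{exactly} $a$ along $Y'$; since ``multiplicity exactly $a$'' is an open condition on a family of curves already satisfying ``multiplicity $\geq a$'', irreducibility of $\R^X(Y;a,d)_{\F_e}$ forces the general member to have multiplicity exactly $a$ along $Y'$, and only then is the strict-transform map $\Psi$ defined on a dense open set and inverse to $\Theta$. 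Your closing paragraph gestures at exactly this (``non-emptiness plus irreducibility let one argue on general members''), so you have identified where the hypotheses enter, but the mechanism is never assembled; as a separate minor point, the statement ``a dominant rational map between varieties of the same dimension from an irreducible source is birational onto its image'' is false in general (consider a degree-two cover) and is not needed once the two maps are shown to be mutually inverse on dense open sets.
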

\begin{proof}
I have
$$-K_{S}=\mu^*(-K_{\F_e})-\sum_{y_i\in Y^\prime}E_{y_i}, $$
with $E_{y_i}$ the exceptional divisors over the point $y_i\in Y^\prime$, and 
$$(-aK_{{S}}+dF)\cdot E_{y_i}=a.$$
Let $[C]\in \R^{X_S}(Y_S;a,d)_S$ be a general point then $C$ is
irreducible and the above computation shows
that $[\mu(C)]\in \R^X(Y;a,d)_{\F_e}$.  This produces an injective map
$$\Theta:\R^{X_S}(Y_S;a,d)_S\rat\R^X(Y;a,d)_{\F_e}, $$
given by $\Theta([C])=[\mu(C)]$. By hypothesis $\R^X(Y;a,d)_{\F_e}$ is irreducible and the general element in $\Theta(\R^{X_S}(Y_S;a,d)_S)$  has multiplicity $a$ along $Y^\prime$. Hence  the  general
element in $\R^X(Y;a,d)_{\F_e}$ has multiplicity $a$ along $Y^\prime$.
This shows that the map 
$$\Psi:\R^X(Y;a,d)_{\F_e}\rat \R^{X_S}(Y_S;a,d)_S,$$ 
given by $\Psi([\Gamma])=[\mu^{-1}_*(\Gamma)]$ is defined on a dense open set and proves the claim.
\end{proof}

Proposition~\ref{pro:conicbundle} allows me to substitute $\R^{Z}(\emptyset;a, d)_{S}$ with $\R^X(Y^\prime;a, d)_{\F_e}$. The next step is to go to $\P^2$. For this consider the following construction.

\begin{construction}
  Let $X:=\{x_1,\ldots,x_{|e-1|+2k}\}\subset\F_e$. Fix a subset
  $X^\prime\subset X$ of $|e-1|$ points. Let $\f:\F_e\rat \F_1$ be the
  rational map obtained by $|e-1|$ elementary transformations centered
  in $X^\prime$, and $\epsilon:\F_1\to\P^2$ the contraction of the
  unique $(-1)$-curve to the point $p_0\in\P^2$. Let
  $f:\P^2\rat\P^2 $ be the \dJen transformation of degree $k+1$ centered in $(\epsilon\circ\f)(X\setminus X^\prime)$, see Definition~\ref{def:centered}. 
\begin{definition}
I call $\chi^X:=f\circ\epsilon\circ\f$ the composition.
\end{definition}
The map $f$ factors through the blow up $\epsilon$, therefore $\chi^X$ is independent of the choice of the subset $X^\prime\subset X$. 
\label{con:mapchiA}
\end{construction}

\begin{remark}\label{rem:mapchi}
Let $Y, X\subset\F_e$ be a subsets with $|X|=|e-1|+2k$. Then the map $\chi^X$ is composed with a \dJen map of degree $k$. In particular $(\epsilon\circ\f)(Y)$ satisfies condition $(\dag)$ and for $X$ general the points $\chi^X(Y)$ impose independent conditions on curves of degree $k$.
\end{remark}

An elementary transformation on $\F_e$ allows to switch to $\F_{|e-1|}$.
\begin{lem}\label{lem:elem}
Fix two subsets $X, Y\subset \F_e$, keep in mind Convention~\ref{ref:xy}. Let $\Phi_p:\F_e\rat \F_{|e-1|}$ be the elementary transformation
centered at $p\in X$. Define  $Y_p:=\Phi_p(Y)$ and
$X_p=\Phi_p(X\setminus\{p\})$. If 
$\R^{X_p}(Y_p;a,d-a)_{\F_{|e-1|}}$ is irreducible and
$\R^X(Y;a,d)_{\F_e}$ is not empty then $\R^X(Y;a,d)_{\F_e}$ is birational to $\R^{X_p}(Y_p;a,d-a)_{\F_{|e-1|}}$.
\end{lem}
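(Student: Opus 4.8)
The statement asserts that an elementary transformation $\Phi_p$ centered at a point $p$ of $X$ induces a birational map between the two families of rational curves. The strategy is identical to the proof of Proposition~\ref{pro:conicbundle}: build two mutually inverse rational maps $\Theta$ and $\Psi$ and check they are defined on dense opens. First I would compute how the relevant divisor class transforms. Write $\Phi_p:\F_e\rat\F_{|e-1|}$ as the blow up $\sigma:\widetilde{S}\to\F_e$ at $p$ followed by the contraction $\sigma':\widetilde{S}\to\F_{|e-1|}$ of the strict transform $\widetilde{F}_p$ of the fiber $F_p$. On $\widetilde S$ one has $\sigma^*(-aK_{\F_e}+dF)$, and since $\sigma'$ contracts $\widetilde F_p$ with $(-aK+dF)\cdot\sigma^*F = 2a$ of which $a$ is absorbed by the exceptional divisor $E_p$ over $p$ (because $Z=X\cup Y$ carries multiplicity $a$, being a base point of the systems $|\I_{X^{2a}\cup Y^a}(\cdots)|$), the pushforward $\sigma'_*\sigma^*$ of the class $-aK_{\F_e}+dF$ is exactly $-aK_{\F_{|e-1|}}+(d-a)F$. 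This is the source of the index shift $d\mapsto d-a$ in the statement.

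Next, for a general $[C]\in\R^X(Y;a,d)_{\F_e}$, the curve $C$ is irreducible, rational, passes through $p$ (it lies in $|\I_{X^{2a}\cup Y^a}(-aK_{\F_e}+dF)|$ and $p\in X$, so in fact with multiplicity $2a$ at $p$; what matters for the computation is that after the elementary transformation the image still has the prescribed multiplicities). The above class computation shows that its strict transform $\Phi_{p*}C$ lies in $\R^{X_p}(Y_p;a,d-a)_{\F_{|e-1|}}$, where $X_p=\Phi_p(X\setminus\{p\})$ and $Y_p=\Phi_p(Y)$. This gives an injective rational map
$$\Theta:\R^X(Y;a,d)_{\F_e}\rat\R^{X_p}(Y_p;a,d-a)_{\F_{|e-1|}},\qquad \Theta([C])=[\Phi_{p*}C].$$
Here I use that $X,Y$ satisfy Convention~\ref{ref:xy}, so that $\Phi_p$ is an isomorphism in a neighborhood of the points of $X\setminus\{p\}$ and $Y$ and the prescribed multiplicity conditions are preserved transparently.

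For the inverse, $\Phi_p^{-1}$ is again an elementary transformation, centered at the point of $X_p$ that is the image of the contracted fiber $F_p$; call this point $p'$. The same class computation applied in the reverse direction sends $-aK_{\F_{|e-1|}}+(d-a)F$ to $-aK_{\F_e}+dF$, provided the general element of $\R^{X_p}(Y_p;a,d-a)_{\F_{|e-1|}}$ has the correct multiplicity $a$ at $p'$. This is where I would invoke the irreducibility hypothesis on $\R^{X_p}(Y_p;a,d-a)_{\F_{|e-1|}}$ exactly as in Proposition~\ref{pro:conicbundle}: since $\R^X(Y;a,d)_{\F_e}$ is nonempty, $\Theta$ produces elements of the target whose members have multiplicity $a$ at $p'$, and by irreducibility the \emph{general} member of the target has multiplicity $a$ at $p'$. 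Hence the map $\Psi([\Gamma])=[\Phi_p^{-1}{}_*\Gamma]$ is defined on a dense open set, and $\Psi\circ\Theta=\mathrm{id}$ on a dense open, proving the birationality.

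**Main obstacle.** The only delicate point, as in the previous proposition, is the asymmetry between the two hypotheses: nonemptiness is assumed on the $\F_e$ side while irreducibility is assumed on the $\F_{|e-1|}$ side. One must check that an element of $\R^X(Y;a,d)_{\F_e}$ is \emph{automatically} irreducible and rational (this is built into Definition~\ref{def:conicbundle} and Remark~\ref{rem:CH}, by construction of the $\R$-varieties), and then that the multiplicity-$a$ condition at the new base point $p'$, a priori only known for the image locus $\Theta(\R^X(Y;a,d)_{\F_e})$, propagates to the general point of the irreducible variety $\R^{X_p}(Y_p;a,d-a)_{\F_{|e-1|}}$. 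Once that semicontinuity-type argument is in place, the rest is the bookkeeping of the divisor class under the two blow-ups and one blow-down, which is routine.
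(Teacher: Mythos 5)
Your overall architecture is the right one and matches the paper's: push curves forward to define an injective map $\Theta$, verify that the divisor class becomes $-aK_{\F_{|e-1|}}+(d-a)F$, and then use nonemptiness of the source together with irreducibility of the target to propagate an open condition from the image of $\Theta$ to the general member of $\R^{X_p}(Y_p;a,d-a)_{\F_{|e-1|}}$, so that the inverse map $\Psi$ is defined on a dense open set. However, the condition you propagate is the wrong one, and with it the argument fails. The point $p'=\Phi_p(F_p)$ onto which the fiber $F_p$ is contracted belongs to neither $X_p$ nor $Y_p$ (you call it ``the point of $X_p$ that is the image of the contracted fiber'', but $X_p=\Phi_p(X\setminus\{p\})$ explicitly omits it). Since $p\in X$, a general $[\Gamma]\in\R^X(Y;a,d)_{\F_e}$ is irreducible with $\mult_p\Gamma=2a=\Gamma\cdot F_p$, so its strict transform is disjoint from the strict transform of $F_p$, and hence $\Phi_p(\Gamma)$ \emph{misses} $p'$ altogether. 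The condition one must propagate to the general member of the target is therefore ``does not pass through $p'$'' --- exactly what the paper checks --- not ``has multiplicity $a$ at $p'$''. If the general member of the target had multiplicity $m'$ at $p'$, its transform under $\Phi_p^{-1}$ would have multiplicity $2a-m'$ at $p$; with $m'=a$ this is $a$, which fails the required membership in $|\I_{X^{2a}\cup Y^a}(-aK_{\F_e}+dF)|$, so $\Psi$ would not land in $\R^X(Y;a,d)_{\F_e}$. Worse, your claim that ``$\Theta$ produces elements of the target whose members have multiplicity $a$ at $p'$'' is false (they have multiplicity $0$ there), so the irreducibility step would be propagating a condition that is not even satisfied on the image of $\Theta$.

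A secondary inaccuracy: your justification of the shift $d\mapsto d-a$ (``$a$ is absorbed by the exceptional divisor $E_p$ because $Z=X\cup Y$ carries multiplicity $a$'') conflates the multiplicity $a$ imposed along $Y$ with the multiplicity $2a$ imposed along $X$; since $p\in X$, the relevant multiplicity is $2a$, as you yourself note two sentences earlier. The clean computation, as in the paper, is that both $\Phi_p(\Gamma)\cdot F=2a$ and $\Phi_p(\Gamma)\cdot C_0=\Gamma\cdot C_0=d-ae+2a$ are preserved by the elementary transformation (the center lies off $C_0$), and one then solves for the coefficients in the basis $\{-K_{\F_{|e-1|}},F\}$: the drop by $a$ in the $F$-coefficient comes from $-K_{\F_{|e-1|}}\cdot C_0$ differing from $-K_{\F_e}\cdot C_0$ by one, not from any multiplicity being ``absorbed'' at $p$.
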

\begin{proof}

Let $[\Gamma]\in \R^X(Y;a,d)_{\F_e}$ be any element, then 
$\Phi_p(\Gamma)\cdot F=2a$ and 
$$\Phi_p(\Gamma)\cdot C_0=\Gamma\cdot C_0=d-ae+2a=(d-a)-(|e-1|)a+2a.$$ 
This yields $[\Phi_p(\Gamma)]\sim -aK_{\F_{|e-1|}}+(d-a)F$. This produces an injective map
$$\Theta:\R^X(Y;a,d)_{\F_e}\rat \R^{X_p}(Y_p;a,d-a)_{\F_{|e-1|}}, $$
given by $\Theta([\Gamma])=[\Phi_p(\Gamma)]$.
 To conclude observe that the general curve in
$\R^{X}(Y;a,d)_{\F_{e}}$ is irreducible, therefore $\Theta([\Gamma])$ does
not contain $\Phi_p(F_p)$. Hence the general curve in
$\Theta(\R^X(Y;a,d)_{\F_e})$ does not contain $\Phi_p(F_p)$. Since  $\R^{X_p}(Y_p;a,d-a)_{\F_{|e-1|}}$ is irreducible then the  map 
$$\Psi:\R^{X_p}(Y_p;a,d-a)_{\F_{|e-1|}}\rat\R^X(Y;a,d)_{\F_e},$$ 
given by $\Psi([\Gamma])=[(\Phi_p^{-1})_*(\Gamma)]$ is well defined on a dense open subset and proves the claim.
\end{proof}

It is time to go to $\P^2$. 
\begin{definition}
\label{def:rp2} Let $A\subset\P^2\setminus\{p_0\}$ be a reduced
0 dimensional subscheme satisfying assumption $(\dag)$. I define the algebraic sets
\begin{eqnarray*}\R(A;a,d)_{\P^2}:=\overline{\{\mbox{irreducible rational curves in
  $|\I_{p_0^{d-2a}\cup A^a}(d)|$}}\subset\\\Ratn(\P^2).
\end{eqnarray*}
\end{definition}
\begin{remark}\label{rem:Testa}
When $d>1$ the varieties $\R(A;a,d)_{\P^2}$ are linear sections of a  Severi variety on $\mathbb{F}_1$.
\end{remark}

\begin{lem}\label{lem:elem1} Let $\mu:\F_1\to \P^2$ be the blow up of $\P^2$ in the
  point $p_0$.  Fix a subset $Y\subset \F_1$, remember Convention~\ref{ref:xy}.
If 
$\R(\mu(Y);a,d+3)_{\P^2}$ is irreducible and $\R^{\emptyset}(Y;a,d)_{\F_1}$ is not empty then $\R(\mu(Y);a,d+3)_{\P^2}$ is birational to $\R^{\emptyset}(Y;a,d)_{\F_1}$.
\end{lem}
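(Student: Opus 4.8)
The plan is to mimic the proof of Lemma~\ref{lem:elem}: set up an explicit birational correspondence between the two families of rational curves and check that it preserves the defining numerical and base-point data in both directions. Concretely, let $\mu:\F_1\to\P^2$ be the blow-down of the unique $(-1)$-curve, call it $E_0$, to the point $p_0$. For $[\Gamma]\in\R^{\emptyset}(Y;a,d)_{\F_1}$ a general element, I would first compute the class of $\mu(\Gamma)$ in $\P^2$: since $\Gamma\sim -aK_{\F_1}+dF$ and pushing forward by $\mu$ converts the fiber class $F$ and the negative section into lines through $p_0$, the image $\mu(\Gamma)$ has degree $d+3$ (because $-K_{\F_1}=\mu^*(\O_{\P^2}(3))-2E_0$, hence $-aK_{\F_1}+dF$ maps to degree $3a + (\text{contribution of }dF) = d+3$ once one keeps careful track; this is the routine computation I would carry out). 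The multiplicity of $\mu(\Gamma)$ at $p_0$ is $\Gamma\cdot E_0 = (-aK_{\F_1}+dF)\cdot E_0$, which one computes to be $d+3 - 2a = (d+3)-2a$, matching the exponent $p_0^{d-2a}$ — wait, one must be careful: I would double-check that this equals $(d+3)-2a$ as required by Definition~\ref{def:rp2}, adjusting the bookkeeping of $E_0$ versus the total transform as needed. The multiplicities along $\mu(Y)$ are preserved because $\mu$ is an isomorphism near $Y$ (Convention~\ref{ref:xy}), so $\mu(\Gamma)$ has multiplicity $a$ along $\mu(Y)$. This gives an injective rational map
$$\Theta:\R^{\emptyset}(Y;a,d)_{\F_1}\rat\R(\mu(Y);a,d+3)_{\P^2},\qquad \Theta([\Gamma])=[\mu(\Gamma)].$$

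For the reverse direction I would argue exactly as in Lemma~\ref{lem:elem} and Proposition~\ref{pro:conicbundle}: since $\R^{\emptyset}(Y;a,d)_{\F_1}$ is nonempty and its general member is irreducible (Remark~\ref{rem:CH}), the image $\Theta(\R^{\emptyset}(Y;a,d)_{\F_1})$ is a subvariety of $\R(\mu(Y);a,d+3)_{\P^2}$ whose general element passes through $p_0$ with the prescribed multiplicity $(d+3)-2a$ and is irreducible (hence does not contain $E_0$, i.e.\ its strict transform is again an honest curve in the right linear system on $\F_1$). Because $\R(\mu(Y);a,d+3)_{\P^2}$ is assumed irreducible, the general element of the whole variety then enjoys the same multiplicity at $p_0$, so the strict-transform map
$$\Psi:\R(\mu(Y);a,d+3)_{\P^2}\rat\R^{\emptyset}(Y;a,d)_{\F_1},\qquad \Psi([C])=[\mu^{-1}_*(C)]$$
is defined on a dense open subset and is a two-sided inverse to $\Theta$ there. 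That establishes the birational equivalence.

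The step I expect to be the only real subtlety is the numerical bookkeeping converting $-aK_{\F_1}+dF$ into the pair (degree, multiplicity at $p_0$) on $\P^2$ — in particular making sure the degree lands on $d+3$ and the multiplicity on $(d+3)-2a$ rather than some shifted value, and checking that the linear system $|\I_{p_0^{d-2a}\cup\mu(Y)^a}(d)|$ in Definition~\ref{def:rp2} indeed uses the exponent $d-2a$ consistently with $(d+3)-2(a)$ after the degree shift (so the statement's ``$d+3$'' is compatible with the ``$d$'' appearing inside $\R(A;a,d)_{\P^2}$). Once the classes match, everything else is a verbatim repetition of the pattern already used twice in this section: nonemptiness on the source plus irreducibility on the target forces the generic member on the target to have the required base-point behavior, which is exactly what makes $\Psi$ generically defined. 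I would also remark, as a sanity check, that Remark~\ref{rem:Testa} (identifying $\R(A;a,d)_{\P^2}$ with a linear section of a Severi variety on $\F_1$) is consistent with this: the Severi variety on $\F_1$ attached to $-aK_{\F_1}+dF$ is precisely what both sides compute, so the birationality is essentially forced.
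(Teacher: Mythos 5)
Your strategy coincides with the paper's: push $\Gamma$ forward by $\mu$, match the numerical data, and use irreducibility of $\R(\mu(Y);a,d+3)_{\P^2}$ to see that the strict-transform map is generically defined and inverts the pushforward. The inverse-direction argument is fine and is what the paper does. The genuine gap is the one step you explicitly defer, namely the conversion of $-aK_{\F_1}+dF$ into a degree and a multiplicity at $p_0$: that is the entire content of the lemma and it cannot be left as a promissory note, especially since it does not come out the way you pencil it in. Explicitly, $-aK_{\F_1}+dF=2aC_0+(3a+d)F$ and $\mu^*\O_{\P^2}(1)=C_0+F$, so $\deg\mu_*\Gamma=\Gamma\cdot(C_0+F)=3a+d$ and $\mult_{p_0}\mu_*\Gamma=\Gamma\cdot C_0=a+d=(3a+d)-2a$. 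Hence $[\mu(\Gamma)]$ lies in $\R(\mu(Y);a,d+3a)_{\P^2}$, not in $\R(\mu(Y);a,d+3)_{\P^2}$; the two agree only for $a=1$. Your asserted value ``$3a+(\mathrm{contribution\ of\ }dF)=d+3$'' is therefore false for $a\geq 2$, and the consistency check you propose against Definition~\ref{def:rp2} fails at exactly that point.

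For what it is worth, the paper's own proof contains the same slip: it asserts $\Gamma\cdot C_0=d+3-2a$ and $\mu(\Gamma)\sim\O(d+3)$ without computation, while the correct values are $a+d$ and $3a+d$. The statement should read $d+3a$ in place of $d+3$ (with the shift $d+3a-a(e-1+2k)$ then propagating into Proposition~\ref{pro:cbtop2} and the proof of Theorem~\ref{thm:conicbundle}); since $d$ is a free bookkeeping parameter there, the downstream results survive with relabelled constants. But as a proof of the lemma as written, your argument does not close: to repair it you must actually carry out the intersection computation and correct the target of $\Theta$ accordingly, rather than assuming the degree shift is $3$.
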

\begin{proof}
  As in the previous Lemma note that $\mu$ is an isomorphism in a
  neighborhood of $Y$. Let $[\Gamma]\in \R^{\emptyset}(Y;a,d)_{\F_1}$ be
  an irreducible curve. Then  $\Gamma\cdot C_0=d+3-2a$ and $\mu(\Gamma)\sim\O(d+3)$
  has multiplicity $(d+3)-2a$ in $p_0$. This  yields  
$$[\mu(\Gamma)]\in\R(\mu(Y);a,d+3)_{\P^2}.$$
Then the general element in 
$\R(\mu(Y);a,d+3)_{\P^2}$ is a curve with multiplicity $d+3-2a$ in the
point $p_0$, therefore the strict transform is in $\R^{\emptyset}(Y;a,d)_{\F_1}$.
\end{proof}

These sum up to give the following proposition.

\begin{pro}\label{pro:cbtop2}
 Let $S$ be a standard conic bundle and $\mu:S\to\F_e$ the blow down
 to some $\F_e$ such that the indeterminacy points of $\mu^{-1}$, say
 $Y^\prime$, satisfy convention \ref{ref:xy}. Let $X,Y\subset\F_e\setminus C_0$ be
 subsets of general points with  $|X|=e-1+2k$, and $\chi^X:\F_e\rat\P^2$ the associated birational
 modification in Construction~\ref{con:mapchiA}. 
Assume that:
 \begin{itemize}
\item[-] $Y\supseteq Y^\prime$,
 \item[-]$\R(\chi^X(Y);a,d+3-a(e-1+2k))_{\P^2}$ is
 irreducible and not empty,
\item[-] the general element in
  $\R(\chi^X(Y);a,d+3-a(e-1+2k))_{\P^2}$ has multiplicity $d+3-2a-a(e-1+2k)$ in
 $p_0$, and does not contain the indeterminacy locus of
 $(\chi^X)^{-1}$,
\item[-] the general element in
  $\R(\chi^X(Y);d+3-a(e-1+2k))_{\P^2}$
 has multiplicity $a$ along $\chi^X(Y)$.
 \end{itemize}
Then the variety $\R^{\mu^{-1}(X)}(\mu^{-1}(Y\setminus Y^\prime);a,d)_S$ is birational to $\R(\chi^X(Y);a,d+3-a(e-1+2k))_{\P^2}$.
\end{pro}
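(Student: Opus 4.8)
The plan is to obtain the asserted birational equivalence as a composition, following the factorization $\chi^X=f\circ\epsilon\circ\f$ of Construction~\ref{con:mapchiA} together with the blow-down $\mu\colon S\to\F_e$. Each of the four maps $\mu$, $\f$, $\epsilon$, $f$ will contribute one birational equivalence between the corresponding families of irreducible rational curves, and the product of these four equivalences is exactly the statement. Three of the four are the content of results already proved; the genuinely new work is the fourth, the \dJen transformation $f$.

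Write $\delta:=d+3-a(e-1+2k)$ and $\mu_0:=\delta-2a=d+3-2a-a(e-1+2k)$, so that, by Definition~\ref{def:rp2} together with the third and fourth hypotheses, a general member $D$ of $\R(\chi^X(Y);a,\delta)_{\P^2}$ is an irreducible rational plane curve of degree $\delta$, of multiplicity exactly $\mu_0$ at $p_0$, and of multiplicity exactly $a$ along $\chi^X(Y)$. I would first pass through $f$. Recalling from Construction~\ref{con:mapchiA} that $f$ is a \dJen transformation of degree $k+1$ with $p_0$ of multiplicity $k$ and $2k$ simple base points $(\epsilon\circ\f)(X\setminus X^\prime)$, one has that $f$ preserves the pencil of lines through $p_0$; the hypothesis that $D$ misses the indeterminacy locus of $(\chi^X)^{-1}$ guarantees in particular that $D$ avoids the base locus of $f^{-1}$, so the strict transform $\overline{f^{-1}(D)}$ is again an irreducible rational curve, and an elementary Cremona computation gives its invariants: degree $(k+1)\delta-k\mu_0=\delta+2ka=d+3-a(e-1)$, multiplicity $d+3-2a-a(e-1)$ at $p_0$, multiplicity $2a$ at each of the $2k$ points $(\epsilon\circ\f)(X\setminus X^\prime)$, and multiplicity $a$ along $f^{-1}(\chi^X(Y))=(\epsilon\circ\f)(Y)$. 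The multiplicity $2a$ along the simple base points is the geometrically substantive point: the line joining $p_0$ to such a base point is contracted by $f$, and evaluating the intersection of $\overline{f^{-1}(D)}$ with the exceptional divisor over that point on a resolution of $f$ yields exactly $\delta-\mu_0=2a$. This identifies $\R(\chi^X(Y);a,\delta)_{\P^2}$ birationally with the family of irreducible rational curves in $|\I_{p_0^{d+3-2a-a(e-1)}\cup(\epsilon\circ\f)(X\setminus X^\prime)^{2a}\cup(\epsilon\circ\f)(Y)^{a}}(d+3-a(e-1))|$, the inverse being the strict transform through $f$, which is defined on a dense open set because, by the invariants just computed together with the hypotheses, a general member of the new family again has multiplicity exactly $d+3-2a-a(e-1)$ at $p_0$ and contains none of the lines contracted by $f^{-1}$.

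The remaining three steps are routine applications of the earlier results. Since $\epsilon\colon\F_1\to\P^2$ is an isomorphism away from $C_0=\epsilon^{-1}(p_0)$, the evident extension of Lemma~\ref{lem:elem1} allowing a base locus of multiplicity $2a$ off $C_0$ shows that the family just obtained on $\P^2$ is birational to $\R^{\f(X\setminus X^\prime)}(\f(Y);a,d-a(e-1))_{\F_1}$; here the $2k$ points $(\epsilon\circ\f)(X\setminus X^\prime)$ lift back to $\f(X\setminus X^\prime)\subset\F_1$. Next I would invert the $|e-1|$ elementary transformations constituting $\f$, one at a time: each application of Lemma~\ref{lem:elem} reinstates one of the $|e-1|$ points of $X^\prime$ into the base scheme and raises the fibre-parameter by $a$, so after $|e-1|$ steps one reaches $\R^{X}(Y;a,d)_{\F_e}$. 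Finally, Proposition~\ref{pro:conicbundle}, applied with $X_S=\mu^{-1}(X)$ and $Y_S=\mu^{-1}(Y\setminus Y^\prime)$ (which makes sense because $Y\supseteq Y^\prime$), passes from $\F_e$ back to $S$, yielding the birational equivalence with $\R^{\mu^{-1}(X)}(\mu^{-1}(Y\setminus Y^\prime);a,d)_S$. Composing the four equivalences proves the proposition.

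The one thing requiring attention in chaining these is that Lemma~\ref{lem:elem}, Lemma~\ref{lem:elem1} and Proposition~\ref{pro:conicbundle} each carry the hypotheses that the family nearer to $\P^2$ be irreducible and the family nearer to $S$ be nonempty. Irreducibility propagates inductively, starting from the assumption on $\R(\chi^X(Y);a,\delta)_{\P^2}$, since each birational equivalence transports it to the next family in the chain. Nonemptiness of all the intermediate families is secured in one stroke: a general $D\in\R(\chi^X(Y);a,\delta)_{\P^2}$ misses the indeterminacy locus of $(\chi^X)^{-1}$, so $\overline{(\chi^X)^{-1}(D)}$ is a well-defined irreducible rational curve on $\F_e$ and $\mu^{-1}$ carries it to $S$; and the exactness of the multiplicities of $D$ (equal to $\mu_0$ at $p_0$ and to $a$ along $\chi^X(Y)$), combined with the invariant bookkeeping of the four steps, guarantees that these strict transforms fall into the prescribed Severi-type loci at every intermediate stage, so none of the inverse maps acquires an exceptional component or leaves the relevant linear class. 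I expect the \dJen step of the second paragraph to be the main obstacle: it is the only one not already packaged as a lemma, and it is where the multiplicity bookkeeping — especially the appearance of the multiplicity $2a$ along $(\epsilon\circ\f)(X\setminus X^\prime)$ — must be carried out by hand.
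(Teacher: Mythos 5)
Your proof is correct and takes essentially the same route as the paper's, which likewise observes that a general member of $\R(\chi^X(Y);a,d+3-a(e-1+2k))_{\P^2}$ pulls back under $\chi^X$ and $\mu$ to a member of $\R^{\mu^{-1}(X)}(\mu^{-1}(Y\setminus Y^\prime);a,d)_S$ and then applies Lemma~\ref{lem:elem1}, Lemma~\ref{lem:elem} and Proposition~\ref{pro:conicbundle} backwards recursively. Your explicit bookkeeping for the \dJen step --- in particular the multiplicity $2a$ acquired at the simple base points $(\epsilon\circ\f)(X\setminus X^\prime)$ --- is precisely the piece the paper leaves implicit, and it checks out.
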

\begin{proof}
First observe that $\R^{\mu^{-1}(X)}(\mu^{-1}(Y\setminus Y^\prime);a,d)_S$ is
not empty. I am assuming that  the general
element in
$$\R(\chi^X(Y);a,d+3-a(e-1+2k))_{\P^2}$$
does not contain $(\chi^X)^{-1}\setminus\{p_0\}$,  has multiplicity $d+3-2a-a(e-1+2k)$ in
 $p_0$, and has multiplicity $a$ in $\chi^X(Y)$.  Let
$$[\Gamma]\in\R(\chi^X(Y);a,d+3-a(e-1+2k))_{\P^2}$$
be a general point, then
$[(\chi^X)^{-1}(\Gamma)]\in \R^{\mu^{-1}(X)}(\mu^{-1}(Y\setminus Y^\prime);a,d)_S$. Then I can apply backwards
recursively Lemma~\ref{lem:elem1}, Lemma~\ref{lem:elem}, and
Lemma~\ref{pro:conicbundle} to get the desired conclusion.
\end{proof}

\section{Proof  of the main result}
The first step in the proof is  to produce a  rationally connected subvariety in  some $\R(Y;a,d)_{\P^2}$.
 
\begin{pro}\label{pro:singular}
 Let $Y\subset\P^2\setminus\{p_0\}$ be a set of points satisfying  assumption
 $(\dag)$, and $f$ a general \dJen transformation of degree $d\geq
 4$ with $\mult_{p_0}\L_f=d-1$. Assume that $|Y|= 7$, then
 $\R(f(Y);4,11)_{\P^2}\iso \P^1$, moreover the general curve in   $\R(f(Y);4,11)_{\P^2}$ is irreducible with 4-tuple points along $f(Y)$, and a 3-ple point in $p_0$.
\end{pro}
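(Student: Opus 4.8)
The goal is to understand the Severi-type variety $\R(f(Y);4,11)_{\P^2}$, i.e. irreducible rational plane curves of degree $11$ with a $4$-fold point at each of the $7$ points of $f(Y)$ and a $3$-fold point at $p_0$. The strategy is to pull back by the \dJen transformation $f$ (of degree $d$, centred so that $\mult_{p_0}\L_f=d-1$) and check that on the source $\P^2$ the corresponding linear system becomes very simple --- a pencil of rational curves. First I would compute, via the standard quadratic/\dJen transformation formulas, the class and assigned multiplicities of the strict transform $f^{-1}_*(\Gamma)$ of a general member $\Gamma$: a curve of degree $11$ with multiplicity $4$ at the $2d-2$ simple base points of $f$ (which include, or are, the points of $f(Y)$ --- here $|f(Y)|=2d-2$ forces a numerical relation, so really one takes $d=4$ and $2d-2=6$... but $|Y|=7$, so instead $f$ has its $2d-2$ simple base points chosen so that $f(Y)$ is among them together with possibly one extra; the bookkeeping here is exactly the point). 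The homaloidal net $\L_f$ has degree $d$, multiplicity $d-1$ at $p_0$ and multiplicity $1$ at its $2d-2$ simple base points; imposing that $\Gamma$ passes through $f(Y)$ with multiplicity $4$ and through $p_0$ with multiplicity $3$ translates, after the Cremona transformation, into a linear system on the source whose degree drops substantially.

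Concretely, the second step is the degree count. A \dJen map of degree $d$ contributes: $\deg f^{-1}_*(\Gamma) = \deg(\Gamma)\cdot d - (\text{mult}_{p_0}\Gamma)\cdot(d-1) - \sum(\text{mult at simple base pts}) = 11d - 3(d-1) - (\text{contribution of the }4\text{'s at }f(Y))$. For the output to be a pencil one wants this to collapse to a low-degree system; the free choice of $d\ge 4$ and the genericity of $f$ are there precisely to arrange that $f^{-1}_*\Gamma$ has degree and imposed multiplicities making it a $\P^1$. I would carry this out and identify the resulting system on the source $\P^2$ as a pencil of plane curves with fixed assigned base points, hence isomorphic to $\P^1$; then by Remark~\ref{rem:Testa} (or directly) the map $\Gamma\mapsto f^{-1}_*(\Gamma)$ is birational onto its image, so $\R(f(Y);4,11)_{\P^2}\cong\P^1$.

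The third step is to verify the asserted numerical properties of the general member --- irreducibility, that it really has $4$-fold points exactly at $f(Y)$ and a $3$-fold point at $p_0$, and no worse singularities forced. Irreducibility should follow because the pencil on the source consists of irreducible rational curves (a base-point-free or almost-base-point-free pencil of rational curves pulls back to irreducible curves under a birational map, exactly the mechanism used in Lemma~\ref{lem:elem} and Lemma~\ref{lem:elem1}), and an irreducible curve cannot acquire an extra component under $f^{-1}_*$. That $f(Y)$ gets multiplicity $4$ and $p_0$ multiplicity $3$ is built into the definition of $\R(f(Y);4,11)_{\P^2}$ for the general member; one just needs these to be the generic (not larger) multiplicities, which follows once the dimension count shows the system is a pencil and not something smaller that would force extra tangency or higher multiplicity.

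**Main obstacle.** The delicate point is the bookkeeping with $|Y|=7$ versus the $2d-2$ simple base points of a \dJen map of degree $d$, and in particular why the specific numbers $(a,d_{\text{curve}})=(4,11)$ and the hypothesis $|Y|=7$ conspire to give exactly a $\P^1$ rather than a higher-dimensional family or the empty set. Establishing that the pulled-back linear system is genuinely a pencil --- that the expected dimension is $1$ and that this is achieved (no unexpected base locus collapsing it further, no superabundance) --- is where the genericity of $f$ and of $Y$ must be used carefully, and this is the step I expect to require the most work; the rest is formal, reusing the birational-transfer arguments already established in Lemmas~\ref{lem:elem} and~\ref{lem:elem1}.
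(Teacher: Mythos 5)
There is a genuine gap, and it begins with a misreading of the statement. The set $f(Y)$ is the \emph{image} of the seven points of $Y$ under $f$; it is not the set of simple base points of $f$, nor is it contained in that set (for a general $f$ the base locus of $f$ and of $f^{-1}$ is disjoint from $Y$ and from $f(Y)$). Consequently your central mechanism --- pulling back a general $\Gamma\in\R(f(Y);4,11)_{\P^2}$ by $f$ so that the degree ``collapses'' --- cannot work: a general such $\Gamma$ has multiplicity $0$ at the simple base points of $f^{-1}$, so $\deg f^{-1}_*(\Gamma)\geq 11d-3(d-1)=8d+3\geq 35$, and the transformed system is more complicated, not simpler; no choice of $d\geq 4$ fixes this. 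The only role of $f$ in the proposition is to guarantee that the seven points $f(Y)$ are in suitably general position (no three collinear, no six on a conic, imposing independent conditions), which is exactly what Remark~\ref{rem:mapchi} supplies; $f$ is not the transformation that does the reduction.

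The hard step, which you correctly flag as the main obstacle but leave unresolved, is to show that the system of degree-$11$ curves with seven assigned $4$-fold points and a $3$-fold point at $p_0$ (expected dimension $77-7\cdot 10-6=1$) really is a pencil whose general member is an irreducible rational curve with exactly the assigned multiplicities. The paper settles this by a different and explicit device: a chain of four standard quadratic Cremona transformations centered at triples chosen from the eight points $f(Y)\cup\{p_0\}$ and the exceptional points they create, transforming the system into the pencil of quartics with three assigned double points and four assigned simple points in general position --- manifestly a $\P^1$ of irreducible rational curves --- and then transporting that pencil back while tracking degrees and multiplicities through $4\to 6\to 8\to 10\to 11$. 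Without this (or some substitute, e.g.\ a degeneration or a direct non-speciality argument for $|\I_{p_0^{3}\cup f(Y)^{4}}(11)|$), the claims that the dimension is exactly one, that the general member is irreducible, and that the multiplicities are not larger than assigned remain unproved.
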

\begin{proof} A dimension count shows that $\R(Y;4,11)_{\P^2}$
  is expected to be a linear space of dimension 1.
Let
$\{x_1,\ldots,x_7\}$ be the points in $Y$, then I may assume that
$f(Y)$ has neither three collinear points nor 6 points on a conic, see Remark~\ref{rem:mapchi}.
Let $\omega_1:\P^2\rat\P^2$
be the composition of the standard Cremona transformations centered in
$\{x_1,x_2,x_3\}$ and $\{x_4,x_5,x_6\}$. Let $\{y_1,y_2,y_3\}$ and
$\{z_1,z_2,z_3\}$ be the exceptional points of $\omega^{-1}$. The
general choice of $f$ allows me to assume that
$\{y_1,y_2,y_3,z_1,z_2,z_3,x_7\}$ have neither three collinear points
nor 6 points on a conic. Let
$\omega_1$ be the composition of the standard Cremona transformations
centered in $\{x_7,y_1,y_2\}$ and $\{y_3,p_0,z_1\}$. Let
$\{w_1,w_2,w_3\}$ and $\{t_1,t_2,t_3\}$ be the exceptional
points of $\omega_1^{-1}$. Then again I may assume that $\{z_2,z_3,w_1,w_2,w_3,t_1,t_2,t_3\}$
are distinct points without three collinear points, 6 points on a
conic, and they are not contained by a rational cubic curve. 
Let $\Lambda$ be the pencil of quartic curves singular
in $\{z_2,z_3,w_1\}$ and passing through $\{w_2,w_3,t_1,t_2\}$. Then a
direct and straightforward computation shows that
the strict transform linear system
$(\omega_1\circ\omega)^{-1}_*(\Lambda)$ is a pencil of irreducible
curves of degree 11 with 4-tuple points in $Y$ and a triple point in $p_0$.
The following table summarizes the computation. 
$$\begin{array}{|r|cccccccc|}
\hline\deg&z_1&z_2&z_3&w_1&w_2&w_3&t_1&t_2\\
\hline
4 &{\bf 0}&2&2&2&{\bf 1}&{\bf 1}&1&1\\\hline
6 &2&{\bf 2}&2&2&3&3&{\bf 1}&{\bf 1}\\\hline
8 &{\bf 2}&4&{\bf 2}&{\bf 2}&3&3&3&3\\\hline
10&4&4&4&4&{\bf 3}&{\bf 3}&{\bf 3}&3\\\hline
11&4&4&4&4&4&4&4&3\\  \hline
\end{array}
$$
The columns indicate the degree of the curve and the corresponding  multiplicities in the points. To pass from one row to the next apply the standard Cremona transformation centered in the bold points.
\end{proof}
\begin{remark} It is reasonable to expect that the linear system in 
Proposition~\ref{pro:singular} is the best possible. In other words, in the notation of section~\ref{sec:uniratcon}, I
expect that  there are no subvarieties $$R\subset\R(\Sigma_{m_1},\ldots,\Sigma_{m_s};a,d)\subset \Ratn(T/\P^1)$$ 
such that $R_x$ is a linear space of positive dimension and the general element is irreducible as
soon as $|Y|>7$.
\end{remark}

\begin{proof}[Proof of Theorem~\ref{thm:conicbundle}]
In the
notation of section~\ref{sec:uniratcon} let $S_x$ be a general fiber
of the morphism $\psi:T\to\P^1$. Then $S_x$ is a standard conic
bundle of dimension 2 and I may assume that it is the blow up of
$\F_e$ along a subset $Y^{\prime}$, of length $\delta\leq 7$, that satisfies
the Convention~\ref{ref:xy}. Let $\mu_x:S_x\to\F_e$ be any such map. Fix
$s=e-1+8$ general sections $\{\Sigma_1,\ldots,\Sigma_{s}\}$ of the
morphism $\psi$, and 
$$Z_x=\cup_1^s\Sigma_{i}\cap S_x.$$ 
Fix $7-\delta$ more general sections
$\{\Sigma_{s+1},\ldots,\Sigma_{s+7-\delta}\}$ and let $Y_x=\cup_1^{7-\delta}\Sigma_{s+i}\cap S_x.$.

By Construction~\ref{con:mapchiA} the map $\chi^{Z_x}\circ\mu_x$ is composed with a general \dJen transformation of degree 4. Then by Proposition~\ref{pro:singular} I have:
\begin{itemize}
\item[i)] $\R((\chi^{Z_x}\circ\mu_x)(Y^\prime\cup Y_x);4,11)_{\P^2}\iso\P^1$,
\item[ii)] the general curve in   $\R((\chi^{Z_x}\circ\mu_x)(Y^\prime\cup Y_x);4,11)_{\P^2}$ is irreducible with 4-tuple points along $(\chi^{Z_x}\circ\mu_x)(Y^\prime\cup Y_x)$, and a 3-ple point in $p_0$.
\end{itemize}
Then I may apply  Proposition~\ref{pro:cbtop2} to get
$$\R^{Z_x}(Y_x;4,36+4e)_{S_x}\iso\P^1.$$
Together with ii), keep also in mind Remark~\ref{rem:CH}, this is enough to conclude  by Proposition~\ref{pro:reduction}.
\end{proof}
\begin{proof}[Proof of Corollary~\ref{cor:main}]
Assume first that $W\iso\F_e$ with $f:W\to\P^1$ a conic bundle structure. The linear systems $\Lambda=f^*\O(1)$ and $\M\sim C_0+(e+1)f$ satisfies the assumption of Theorem~\ref{thm:conicbundle} and allows to conclude.

If $W\iso\P^2$ fix a point $z\in \Delta$. Since $\pi$ is a standard conic bundle then $\Delta$ has at most ordinary double points. Let $\mu:\F_1\to\P^2$ be the blow up of $z$. It is known, \cite[Proposition 2.4]{Sa}, that there is a standard conic bundle $\pi_1:X_1\to\F_1$  such that the following diagram  is commutative
$$\xymatrix{
X_1\ar@{.>}[r]^{\epsilon}\ar[d]_{\pi_1}&X\ar[d]^{\pi}\\
 \F_1\ar[r]^\mu&\P^2\\}.
$$
Moreover the discriminant curve of $\pi_1$ is $\Delta_1=\mu^{-1}(\Delta)\setminus C_0$, \cite[Corollary 2.5]{Sa}. That is  $\Delta_1\sim aC_0+bF$ with $a=
 \deg\Delta-\mult_{z}\Delta$. This is enough to conclude again by Theorem~\ref{thm:conicbundle}.
\end{proof}

\end{document}